\newtheorem{theorem}{Theorem}[section]
\newtheorem{remark}[theorem]{Remark}
\newtheorem{example}[theorem]{Example}
\newtheorem{lemma}{Lemma}[section]
\newcommand{\be}{\begin{equation}}
\newcommand{\ee}{\end{equation}}
\newcommand{\bea}{\begin{eqnarray}}
\newcommand{\eea}{\end{eqnarray}}
\newcommand{\bna}{\begin{eqnarray*}}
\newcommand{\ena}{\end{eqnarray*}}
\journal{***}
\begin{document}

\begin{frontmatter}

\title{A note for $W^{1,p}(V)$ and $W_{0}^{1,p}(V)$ on a locally finite graph}

\author[bnu]{Yulu Tian\corref{Tian}}
\ead{tianyl@mail.bnu.edu.cn}
\author[bnu]{Liang Zhao}
\ead{liangzhao@bnu.edu.cn}
\address[bnu]{School of Mathematical Sciences, Key Laboratory of Mathematics and Complex Systems of MOE,\\
	Beijing Normal University, Beijing, 100875, China}

\cortext[Tian]{Corresponding author.}

\begin{abstract}
  In this paper, we investigate the Sobolev spaces $W^{1,p}(V)$ and $W_{0}^{1,p}(V)$ on a locally finite graph $G=(V,E)$, which are fundamental tools when we apply the variational methods to partial differential equations on graphs. As a key contribution of this note, we show that in general, $W^{1,p}(V)\neq W_0^{1,p}(V)$ on locally finite graphs, which is different from the situation on Euclidean space $\mathbb{R}^N$.
\end{abstract}

\begin{keyword} Sobolev space; locally finite graph; Cheeger constant

\MSC[2020] 46E39, 35P05, 05C63, 35R02, 35A15
\end{keyword}

\end{frontmatter}

\section{Introduction}\label{sec1}

In addressing a partial differential equation (PDE) by seeking the critical points of the associated energy functional, it is imperative to operate within a suitable Sobolev space to guarantee the functional's well-definedness. The selection of the Sobolev space is, therefore, pivotal in the calculus of variations applied to the equation. For example, given that $W^{1,2}(\mathbb{R}^N)$ is reflexive and $C_0^{\infty}(\mathbb{R}^N)$ is dense in it, we may utilize $\phi\in C_0^{\infty}(\mathbb{R}^N)$ as test functions to characterize a weak solution in $W^{1,2}(\mathbb{R}^N)$ for a nonlinear Schr\"{o}dinger equation defined on the Euclidean space $\mathbb{R}^N$. Moreover, the regularity of the weak solution is contingent upon the embedding characteristics of the Sobolev space and the nonlinear structure of the equation.

PDEs on graphs extend the classical study of PDEs from Euclidean spaces or manifolds to graph structures. Since graphs are capable of modeling phenomena in various fields, such as mathematical physics, network analysis, and computer science, this area has witnessed considerable advancement in recent years \cite{chang2023ground,grigor2016kazdan,grigor2016yamabe,grigor2017existence,han2020existence,hou2022existence,hua2023class,hua2023existence,keller2018kazdan,sun2022brouwer,zhang2018convergence}. Specifically, when exploring variational methods for PDEs on graphs, comprehending Sobolev spaces within the graph framework remains essential. In this paper, our primary focus is on the spaces $W^{1,p}(V)$ and $W_0^{1,p}(V)$, as their attributes are instrumental in identifying potential solutions to kinds of important nonlinear equations on graphs. To delineate the specific problems we address, we begin by introducing some foundational notations for graphs.

Consider a connected graph $G=(V,E)$ endowed with a measure $\mu: V\to \mathbb{R}^{+}$ defined on its vertices and a symmetric weight function $w: E\to \mathbb{R}^{+}$ assigned to its edges.  A commonly used norm for $u\in W^{1,p}(V)$ is
\begin{equation*}
	\Vert u\Vert_{W^{1,p}(V)}^p=\|u\|_p^p+\|\nabla u\|_p^p, \ \ 1\leq p<+\infty,
\end{equation*}
where $\|u\|_p^p=\underset{x\in V}\sum\mu_x |u(x)|^p$ and $\|\nabla u\|_p^p=\underset{x\in V}\sum\mu_x\left(\frac{1}{2\mu_x}\underset{xy\in E}\sum w_{xy}\left\vert u(y)-u(x)\right\vert^2\right)^{p/2}$, $\mu_x$ is the measure on the vertex $x\in V$ and $w_{xy}$ is the weight on the edge $xy\in E$. The space $W_0^{1,p}(V)$ is defined as the completion of the space $C_c(V)$ with respect to the given norm, where $C_c(V)$ denotes the set of functions that are compactly supported on $V$. In the context of PDEs on graphs, as discussed in \cite{ge2018p,ge2020p,zhang2019positive}, one may alternatively employ the space $\mathcal{W}^{1,p}(V)$, equipped with the norm
\begin{equation*}
	\Vert u\Vert_{\mathcal{W}^{1,p}(V)}^p=\|u\|_p^p+\|\nabla_{\mathcal{W}} u\|_p^p,
\end{equation*}
where $\|\nabla_{\mathcal{W}} u\|_p^p=\frac{1}{2}\underset{x\in V}\sum \underset{xy\in E}\sum w_{xy}\vert u(y)-u(x)\vert^p$. It is evident that the two norms  $\|\cdot\|_{W^{1,p}}$ and $\Vert \cdot\Vert_{\mathcal{W}^{1,p}}$ coincide when $p=2$, yet they differ for $p\neq 2$.

The structure of a graph can introduce nuances that make the Sobolev spaces on the graph distinct from that in the continuous setting. In the context of finite graphs, the Sobolev space $W^{1,2}(V)$, along with its embeddings, has been explored in a sequence of papers by Grigor'yan et al. \cite{grigor2016kazdan,grigor2016yamabe,grigor2017existence}. The higher order space $W^{2,2}(V)$ and its embedding on locally finite graphs have been studied by Han et al. \cite{han2020existence}. These studies generally require a positive lower bound condition on the measure $\mu$, and sometimes, a positive upper bound on the weight $w$ is also postulated. More recently, Shao et al. \cite{shao2023sobolev} have established several core properties of the Sobolev spaces $W^{m,p}(V)$ and $W_{0}^{m,p}(V)$ on connected and locally finite graphs. Their findings, which demonstrate completeness, reflexivity, and separability, are notable for not being contingent on the bounds of $\mu$ and $w$. In this paper, they also raised an interesting question:\\

{\it Are the two spaces $W^{1,p}(V)$ and $W_0^{1,p}(V)$ exactly the same?}\\

In the Euclidean space, the Sobolev spaces $W^{1,p}(\mathbb{R}^N)$ and $W_{0}^{1,p}(\mathbb{R}^N)$ coincide for $1\leq p<+\infty$. This equivalence has been explored in several previous works on graphs under various assumptions, with affirmative conclusions. For a connected and locally finite graph $G=(V,E)$, suppose there exists a constant $\mu_{\min}>0$ such that for every $x\in V$, $\mu_x\geq \mu_{\min}$, and the degree $\deg_{x}:=\underset{xy\in E}\sum w_{xy}$ is uniformly bounded above by a constant $C>0$. Under these conditions, the results in \cite[Propositions 2.1 and 2.2]{han2020existence} assert that $W^{1,2}(V)$ is the completion of $C_c(V)$. With the same conditions on measure and degree, Shao et al. in \cite[Proposition 6.2]{shao2023sobolev} established that $W^{1,p}(V)$ is the completion of $C_c(V)$ for $1\leq p<+\infty$. Furthermore, assuming $\underset{x\in V}\sup \frac{\deg_x}{\mu_x}<+\infty$, Shao and Han demonstrated in \cite[Proposition 5.7]{han2021p} that for $1<p<+\infty$, $W^{1,p}(V)$ is the completion of $C_c(V)$. In the significant monograph on the spectral geometry of infinite graphs \cite{keller2021graph}, there are also various criteria for $W^{1,2}(V)=W_0^{1,2}(V)$, appearing as Markov uniqueness.

Contrary to the aforementioned results, the primary contribution of this note is to present a negative answer to the question of equivalence between $W^{1,p}(V)$ and $W_0^{1,p}(V)$. Specifically, we show that these spaces are not necessarily identical if the locally finite graph $G(V,E)$ possesses a positive Cheeger constant
$$
\alpha:=\underset{\Omega\subset V}\inf\frac{\vert\partial\Omega\vert}{\vert\Omega\vert}.
$$
Here $\Omega\subset V$ is a finite vertex set, $\partial\Omega=\{y\in V\setminus\Omega: \text{there exists } x\in \Omega\text{ such that } xy\in E\}$ is the boundary of $\Omega$ and
$$
\vert\Omega\vert=\sum_{x\in \Omega}\deg_x, \ \ \vert\partial\Omega\vert=\sum_{y\in\partial\Omega}\deg_y.
$$
Precisely, we prove the following results for a connected and locally finite graph $G=(V,E)$.

\begin{theorem}\label{pgeq2}
	For $p\geq 2$, we have $W^{1,p}(V)\neq W_{0}^{1,p}(V)$ if $w_{xy}\equiv1$, $\underset{x\in V}\max\mu_x\leq1$, $|V|=\sum_{x\in V}\mu_x<+\infty$ and the Cheeger constant $\alpha>0$.
\end{theorem}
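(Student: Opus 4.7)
The plan is to exhibit the constant function $u\equiv 1$ as an explicit witness for $W^{1,p}(V)\neq W_0^{1,p}(V)$. The membership $u\in W^{1,p}(V)$ is immediate: $\|u\|_p^p=\sum_{x\in V}\mu_x=|V|<+\infty$ and $\nabla u\equiv 0$. For the harder claim, I would suppose towards contradiction that $u\in W_0^{1,p}(V)$, producing $(\phi_n)\subset C_c(V)$ with $\phi_n\to u$ in $W^{1,p}$. Since the truncation $T(t)=\max\{0,\min\{t,1\}\}$ is $1$-Lipschitz and satisfies $|T(t)-1|\leq |t-1|$, replacing $\phi_n$ with $T\circ\phi_n$ preserves the $W^{1,p}$-convergence (the gradient norm only decreases, because each edge-difference contracts), so we may assume $0\leq\phi_n\leq 1$.

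The essential step is to establish a Cheeger-type Poincar\'e inequality: there exists $c=c(\alpha,p)>0$ such that
$$
\|\nabla\phi\|_p^p \ \geq\ c\,\|\phi\|_p^p \qquad\text{for every } \phi\in C_c(V),\ 0\leq\phi\leq 1.
$$
With this inequality in hand, the contradiction is immediate: $\|\nabla\phi_n\|_p^p\to 0$ whereas $\|\phi_n\|_p^p\to|V|>0$, so the estimate must fail for large $n$.

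To prove this Poincar\'e inequality, my approach is to apply the vertex Cheeger estimate $|\partial A|_{\deg}\geq \alpha|A|_{\deg}$ to each super-level set $A_t=\{x:\phi(x)>t\}$ (a finite subset of $\mathrm{supp}(\phi)$) and integrate against $p\,t^{p-1}\,dt$ using the layer-cake formula. This yields
$$
\sum_{y\in V}\deg_y\bigl(M(y)^p-\phi(y)^p\bigr)_+ \ \geq\ \alpha \sum_{x\in V}\deg_x\,\phi(x)^p,
$$
where $M(y):=\max_{z\sim y}\phi(z)$. Using $\deg_x\geq 1$ (the graph is connected with infinitely many vertices) and $\mu_x\leq 1$, the right-hand side dominates $\alpha\|\phi\|_p^p$. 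For the left-hand side, I would use $\phi\leq 1$ to reduce $(M(y)^p-\phi(y)^p)_+$ to a constant multiple of $(M(y)-\phi(y))_+$ via the mean value theorem, and then apply the pointwise bound $(M(y)-\phi(y))_+^2\leq \sum_{z\sim y}(\phi(z)-\phi(y))^2=2\mu_y|\nabla\phi|(y)^2$ together with $\mu_y\leq 1$ and $p\geq 2$ (which guarantees $(\sum_i a_i^2)^{p/2}\geq \sum_i a_i^p$) to pass to the paper's $L^p$-gradient norm.

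The main obstacle is precisely this final reduction, because the degree-weighted vertex-boundary Cheeger constant does not cleanly match the structure of the $L^p$-gradient norm used in the paper: the integrated Cheeger inequality has $\deg$-weights on both sides, while the gradient norm has $\mu$-weights. The hypotheses $w\equiv 1$, $\mu_x\leq 1$, $p\geq 2$, and $|V|<+\infty$ all need to be invoked simultaneously to bridge this discrepancy, and the combinatorial bookkeeping (e.g.\ careful use of the inequality between $(\sum a_i^2)^{p/2}$ and $\sum a_i^p$, and of the factor $\mu_y^{1-p/2}\geq 1$) is where the technical heart of the argument lies.
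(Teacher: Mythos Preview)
Your overall strategy---exhibit $u\equiv 1\in W^{1,p}(V)$, assume $(\phi_n)\subset C_c(V)$ converges to it, and derive a contradiction from a Cheeger-based Poincar\'e inequality---is exactly the paper's. Two remarks on execution.

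First, the truncation to $0\le\phi_n\le 1$ is unnecessary: the $p$-Cheeger inequality the paper uses holds for all $\phi\in C_c(V)$, with no sign or size restriction.

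Second, and more importantly, the step you flag as the ``main obstacle'' is not resolved by your outline, and as written it does not close. After the MVT bound $(M(y)^p-\phi(y)^p)_+\le p\,(M(y)-\phi(y))_+$ you are left with the \emph{first-power} quantity $\sum_y\deg_y\,(M(y)-\phi(y))_+$; there is no uniform way to dominate this by $\|\nabla\phi\|_p^p$ (the H\"older step you would need introduces a factor $(\sum_y\deg_y)^{(p-1)/p}$ over an uncontrolled set). The ingredients $(\sum a_i^2)^{p/2}\ge\sum a_i^p$ and $\mu_y^{1-p/2}\ge 1$ that you list are correct, but they are used \emph{before} the Cheeger step, not after: the paper first shows
\[
\|\nabla\phi\|_p^p\;=\;2^{-p/2}\sum_x\mu_x^{1-p/2}\Bigl(\sum_{y\sim x}|\phi(y)-\phi(x)|^2\Bigr)^{p/2}\;\ge\;2^{-(p-2)/2}\,\mathcal E_p(\phi),
\]
and then invokes the $p$-Cheeger inequality $\mathcal E_p(\phi)\ge \lambda_p\sum_x\deg_x|\phi(x)|^p$ with $\lambda_p\ge 2^{p-1}\alpha^p/p^p$. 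The proof of this last bound is where your MVT step should be replaced: instead of the crude $|a^p-b^p|\le p|a-b|$ (which discards the $\phi^p$ weight), use $|a^p-b^p|\le p\bigl(\tfrac{a^p+b^p}{2}\bigr)^{(p-1)/p}|a-b|$ and then apply H\"older with exponents $\tfrac{p}{p-1},\,p$ to the edge sum, producing the factor $\bigl(\sum_x\deg_x|\phi(x)|^p\bigr)^{(p-1)/p}\cdot\mathcal E_p(\phi)^{1/p}$; this rearranges to the desired inequality. With that correction your argument becomes the paper's.
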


If we consider the spaces $\mathcal{W}^{1,p}(V)$ and $\mathcal{W}^{1,p}_0(V)$, where $\mathcal{W}^{1,p}_0(V)$ is the completion of $C_c(V)$ under the norm $\Vert\cdot\Vert_{\mathcal{W}^{1,p}(V)}$, it holds that the following theorem is true.
\begin{theorem}\label{pg1}
	Under the assumptions of Theorem \ref{pgeq2}, we have $\mathcal{W}^{1,p}(V)\neq \mathcal{W}_{0}^{1,p}(V)$ for $p\geq1$.
\end{theorem}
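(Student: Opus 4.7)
The plan is to show that the constant function $\mathbf{1}$ provides an element of $\mathcal{W}^{1,p}(V)\setminus\mathcal{W}_0^{1,p}(V)$. Membership $\mathbf{1}\in\mathcal{W}^{1,p}(V)$ is immediate from $\|\mathbf{1}\|_p^p=|V|<\infty$ and $\nabla_{\mathcal{W}}\mathbf{1}\equiv 0$. To exclude $\mathbf{1}\in\mathcal{W}_0^{1,p}(V)$, I would argue by contradiction: suppose $\phi_n\in C_c(V)$ with $\phi_n\to\mathbf{1}$ in $\mathcal{W}^{1,p}(V)$. Replacing $\phi_n$ by $T\circ\phi_n$, where $T(s)=\max(0,\min(s,1))$ is $1$-Lipschitz and satisfies $|T(s)-1|\leq|s-1|$, I may assume $0\leq\phi_n\leq 1$. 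The $L^p$-convergence combined with the elementary bound $\sum_x\mu_x\phi_n(x)\leq|V|\max_x\phi_n(x)$ then forces $\max_x\phi_n(x)\to 1$.

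For $p=1$, the contradiction follows at once from the co-area identity. For $0\leq\phi_n\leq 1$,
\[
\|\nabla_{\mathcal{W}}\phi_n\|_1=\int_0^1 |E(\Omega_n^t,V\setminus\Omega_n^t)|\,dt,
\]
where $\Omega_n^t=\{\phi_n>t\}$. For each $t\in(0,\max_x\phi_n(x))$, $\Omega_n^t$ is a nonempty finite proper subset of the connected infinite graph $V$, so at least one edge crosses and $|E(\Omega_n^t,V\setminus\Omega_n^t)|\geq 1$. Hence $\|\nabla_{\mathcal{W}}\phi_n\|_1\geq\max_x\phi_n(x)\to 1$, contradicting $\|\nabla_{\mathcal{W}}\phi_n\|_1\to 0$. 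Only the connectedness of $V$ is used here; the assumption $\alpha>0$ is not needed for $p=1$.

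For $p>1$ this estimate no longer suffices, because $\|\nabla_{\mathcal{W}}\phi_n\|_1$ being bounded below does not force $\|\nabla_{\mathcal{W}}\phi_n\|_p$ to be bounded below, so Cheeger's hypothesis becomes essential. The plan is to derive a Cheeger--Poincar\'e-type inequality $\|\nabla_{\mathcal{W}}\phi\|_p^p\geq c_{\alpha,p}\|\phi\|_p^p$ for every $\phi\in C_c(V)$, which would finish the proof since $\|\phi_n\|_p\to|V|^{1/p}>0$. Integrating $|\partial\Omega|\geq\alpha|\Omega|$ over the superlevel sets of $\phi^p$ and using $|\Omega|\geq\mu(\Omega)$ (from $\mu_x\leq 1\leq\deg_x$, with $\deg_x\geq 1$ coming from connectedness) gives
\[
\sum_x\mu_x\phi(x)^p\leq\frac{1}{\alpha}\sum_y\deg_y\bigl((\max_{x\sim y}\phi(x))^p-\phi(y)^p\bigr)^+.
\]

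The main obstacle I anticipate is to control the right-hand side above by a constant multiple of $\|\nabla_{\mathcal{W}}\phi\|_p^p$. Applying $(a^p-b^p)^+\leq p\,a^{p-1}(a-b)^+$ for $0\leq b\leq a\leq 1$ and $(\max_{x\sim y}\phi(x)-\phi(y))^+\leq\sum_{x\sim y}|\phi(x)-\phi(y)|$ yields control in terms of $\sum_{xy\in E}(\deg_x+\deg_y)|\phi(x)-\phi(y)|$, and the naive H\"older step from here to $\|\nabla_{\mathcal{W}}\phi\|_p$ would require a uniform degree bound that is not assumed. To overcome this I plan to split the edge sum according to gradient magnitude and exploit the trade-off imposed by $\mu_x\leq 1$ and $|V|<\infty$: boundary vertices are degree-heavy (by Cheeger) but $\mu$-light, which constrains how the gradient can be dispersed. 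Carrying this trade-off through the layer-cake decomposition should deliver the required positive lower bound on $\|\nabla_{\mathcal{W}}\phi_n\|_p$ and thus the desired contradiction.
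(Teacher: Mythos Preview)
Your $p=1$ argument is a nice self-contained alternative (it uses only connectedness, not $\alpha>0$), but for $p>1$ your proof is incomplete: the last paragraph is a plan, not an argument, and the ``obstacle'' you set up is artificial.

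The paper never needs a degree bound. The point you are missing is the standard $p$-Cheeger inequality (Lemma~\ref{pcheeger} in the paper, from Keller--Mugnolo): for every $\varphi\in C_c(V)$,
\[
\mathcal{E}_p(\varphi)=\tfrac12\sum_{x}\sum_{xy\in E}w_{xy}|\varphi(y)-\varphi(x)|^p\ \ge\ \frac{2^{p-1}}{p^p}\,\alpha^p\sum_x\deg_x|\varphi(x)|^p,
\]
with no hypothesis on $\deg_x$. The trick is to use the sharper mean-value inequality $|a^p-b^p|\le p\bigl(\tfrac{a^p+b^p}{2}\bigr)^{(p-1)/p}|a-b|$ rather than your $|a^p-b^p|\le p\,a^{p-1}|a-b|$. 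Feeding this into the co-area estimate and splitting the weight as $w_{xy}=w_{xy}^{(p-1)/p}w_{xy}^{1/p}$, H\"older gives
\[
\alpha\sum_x\deg_x|\varphi(x)|^p\le \frac{p}{2}\Bigl(\sum_x\deg_x|\varphi(x)|^p\Bigr)^{\!\frac{p-1}{p}}\bigl(2\mathcal{E}_p(\varphi)\bigr)^{1/p},
\]
because $\tfrac12\sum_x\sum_{xy\in E}w_{xy}(|\varphi(x)|^p+|\varphi(y)|^p)=\sum_x\deg_x|\varphi(x)|^p$. The degree factors cancel; no bound on $\deg_x$ is needed. Once this inequality is in hand, your contradiction is immediate: $\|\nabla_{\mathcal W}\phi_n\|_p^p=\mathcal{E}_p(\phi_n)\ge \frac{2^{p-1}}{p^p}\alpha^p\deg_{x_0}|\phi_n(x_0)|^p$ stays bounded away from $0$ since $\phi_n(x_0)\to 1$ pointwise. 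This is exactly the paper's proof of Theorem~\ref{pg1}.

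So the gap is that you never prove the Cheeger--Poincar\'e inequality you need, and the route you sketch (bounding $(\max_{x\sim y}\phi(x)-\phi(y))^+$ by $\sum_{x\sim y}|\phi(x)-\phi(y)|$) throws away precisely the structure that makes the H\"older step degree-free. Replace that estimate with the symmetric one above and the whole ``trade-off'' discussion becomes unnecessary.
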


\section{Proof of the theorems}\label{sec4}

We base our main idea on the Cheeger inequality as presented in \cite{keller2016general}. It should be pointed out that the case $p=2$ in our theorems has been discussed in \cite[Section 4]{keller2012dirichlet}, from the perspective of Dirichlet forms. First we present three useful lemmas found in \cite{bauer2015cheeger,keller2016general}, the proofs of which are omitted for brevity.
\begin{lemma}\label{area}
	(Area formula). For a nonnegative function $f\in C(V)$ satisfying $\underset{x\in V}\sum \deg_x |f(x)|<+\infty$, there holds
	\begin{equation*}
		\sum_{x\in V}\deg_x f(x)=\int_{0}^{+\infty} |\Omega_{t}(f)|dt,
	\end{equation*}
	where $\Omega_{t}(f):=\{x\in V:f(x)>t\}$.
\end{lemma}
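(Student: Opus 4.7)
The plan is to reduce the identity to the standard layer-cake (Cavalieri) decomposition, with the vertex weight $\deg_x$ playing the role of a reference measure on $V$. The hypothesis $\sum_{x\in V}\deg_x|f(x)|<+\infty$ is precisely what is needed to legitimise the ensuing Fubini--Tonelli exchange, and no additional structural input from the graph is used.

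First, I would start from the pointwise identity
\begin{equation*}
f(x)=\int_{0}^{+\infty}\chi_{\{f(x)>t\}}\,dt,
\end{equation*}
valid for every $x\in V$ because $f\geq 0$; the integrand is $1$ on $[0,f(x))$ and $0$ otherwise. Multiplying by $\deg_x$ and summing over $V$ produces a double sum/integral of nonnegative quantities, so Tonelli's theorem (with $V$ carrying the $\deg$-weighted counting measure and $(0,+\infty)$ endowed with Lebesgue measure) justifies interchanging the sum and the integral, giving
\begin{equation*}
\sum_{x\in V}\deg_x f(x)=\int_{0}^{+\infty}\sum_{x\in V}\deg_x\,\chi_{\{f(x)>t\}}\,dt.
\end{equation*}
Finally, I would identify the inner sum with $\sum_{x\in \Omega_t(f)}\deg_x=|\Omega_t(f)|$, extending the notation $|\cdot|$ from finite to general (possibly infinite) subsets in the natural way, which yields the desired formula.

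The only mild technical point is the measurability and almost-everywhere finiteness of the map $t\mapsto|\Omega_t(f)|$. Measurability is immediate, since the map is monotone nonincreasing in $t$ and hence Borel; equivalently, it is a monotone limit of finite partial sums of characteristic functions of the nested super-level sets. Its almost-everywhere finiteness on $(0,+\infty)$ then follows a posteriori from the finite value of the integral above. Consequently I do not expect a substantive obstacle: the proof really reduces to a single, clean application of Tonelli's theorem, and the argument is independent of any hypothesis on $\mu$ or $w$ beyond what is already encoded in the hypothesis on $f$.
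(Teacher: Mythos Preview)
Your argument is correct: the layer-cake identity combined with Tonelli for the product of the $\deg$-weighted counting measure on $V$ and Lebesgue measure on $(0,+\infty)$ is exactly the standard route, and your remarks on measurability and a.e.\ finiteness of $t\mapsto|\Omega_t(f)|$ tie up the only loose ends. The paper does not supply its own proof of this lemma (it is quoted from \cite{bauer2015cheeger,keller2016general} with the proof omitted), so there is nothing further to compare against.
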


\begin{lemma}\label{coarea}
	(Co-area formula). Under the same assumptions as in Lemma \ref{area}, there holds
	\begin{equation*}
		\frac{1}{2}\sum_{x\in V}\sum_{xy\in E}w_{xy}\vert f(y)-f(x)\vert=\int_{0}^{+\infty}\vert\partial\Omega_t(f)\vert dt.
	\end{equation*}
\end{lemma}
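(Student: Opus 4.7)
The plan is to reduce the co-area formula to the scalar layer-cake identity for $|f(y)-f(x)|$ and then swap summation and integration by Tonelli's theorem. First I would record the pointwise identity
\begin{equation*}
|a-b|=\int_{0}^{+\infty}\bigl|\chi_{\{a>t\}}-\chi_{\{b>t\}}\bigr|\,dt, \qquad a,b\geq 0,
\end{equation*}
which is immediate because the integrand equals $1$ exactly when $t$ lies strictly between $\min(a,b)$ and $\max(a,b)$. Setting $a=f(x)$ and $b=f(y)$ converts the inner absolute value on the left-hand side of Lemma \ref{coarea} into an integral over the super-level-set indicators $\chi_{\Omega_t(f)}$.

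Second, after substituting and interchanging the order of summation and integration, the claim reduces to verifying, for each fixed $t>0$, the identity
\begin{equation*}
\frac12\sum_{x\in V}\sum_{xy\in E}w_{xy}\bigl|\chi_{\Omega_t(f)}(x)-\chi_{\Omega_t(f)}(y)\bigr|=|\partial\Omega_t(f)|.
\end{equation*}
The summand here is nonzero precisely when exactly one endpoint of $xy$ lies in $\Omega_t(f)$, and each unordered boundary edge is counted twice by the double sum; hence, after dividing by $2$, the expression collects the total weight of edges joining $\Omega_t(f)$ to its complement, which is $|\partial\Omega_t(f)|$ in the sense used in the Cheeger framework of \cite{bauer2015cheeger,keller2016general}. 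Tonelli's theorem legitimises the interchange because the summand is nonnegative; the hypothesis $\sum_{x\in V}\deg_x|f(x)|<+\infty$ then ensures both sides are finite, since the triangle inequality and the symmetry $w_{xy}=w_{yx}$ give
\begin{equation*}
\sum_{x\in V}\sum_{xy\in E}w_{xy}|f(y)-f(x)|\leq \sum_{x\in V}\sum_{xy\in E}w_{xy}\bigl(|f(x)|+|f(y)|\bigr)=2\sum_{x\in V}\deg_x|f(x)|.
\end{equation*}

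I do not expect a genuine obstacle here. The only minor subtlety is confirming that the integrability hypothesis is sufficient for the Fubini/Tonelli exchange in the summation variable; everything else is the standard layer-cake argument adapted to edge-weighted counting on a graph. A small bookkeeping remark identifying the edge-weight quantity that arises naturally on the left-hand side with the boundary notation $|\partial\Omega_t(f)|$ used in Section~\ref{sec1} is the sort of comment I would include when writing the proof out in full.
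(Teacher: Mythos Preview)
The paper does not prove this lemma; it cites \cite{bauer2015cheeger,keller2016general} and explicitly omits the argument. Your layer-cake/Tonelli outline is exactly the standard route used in those references, and the pointwise identity together with nonnegativity is all that is needed for the interchange, so the core of your proposal is fine.

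The one point you flag as ``a small bookkeeping remark'' is not small. What your computation actually yields, for each $t$, is the \emph{edge-boundary weight}
\[
\frac12\sum_{x\in V}\sum_{xy\in E}w_{xy}\bigl|\chi_{\Omega_t}(x)-\chi_{\Omega_t}(y)\bigr|
=\sum_{\substack{xy\in E\\ x\in\Omega_t,\ y\notin\Omega_t}}w_{xy},
\]
and this is the quantity denoted $|\partial\Omega_t|$ in \cite{bauer2015cheeger,keller2016general}. It is \emph{not} in general equal to the expression $\sum_{y\in\partial\Omega_t}\deg_y$ written in Section~\ref{sec1} of the present paper, which sums the full degrees of the vertex boundary. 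For instance, on the path $a\sim b\sim c$ with unit weights and $f(a)=1$, $f(b)=f(c)=0$, the left side of the lemma equals $1$, while $\int_0^1\sum_{y\in\partial\Omega_t}\deg_y\,dt=\deg_b=2$. So the identification cannot be made by bookkeeping alone; the lemma is true only under the edge-boundary reading of $|\partial\Omega_t|$, which is the reading intended by the cited sources and the one your argument establishes. When you write this up, state explicitly that $|\partial\Omega_t|$ here denotes the edge-boundary weight rather than the vertex-degree sum displayed in Section~\ref{sec1}.
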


\begin{lemma}\label{fp}
	For any constants $a,b>0$ and $p\geq 1$, we have
	\begin{equation*}
		\vert a^p-b^p\vert\leq p\left(\frac{a^p+b^p}{2}\right)^\frac{p-1}{p}\vert a-b\vert.
	\end{equation*}
\end{lemma}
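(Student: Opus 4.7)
By symmetry I may assume $a \geq b > 0$; the case $a = b$ is trivial. My plan is to represent $a^p - b^p$ as an integral and then chain two classical convexity inequalities to recover the right-hand side.

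The starting point is the fundamental theorem of calculus:
\[
a^p - b^p \;=\; p \int_b^a t^{p-1}\,dt.
\]
To this I would apply H\"older's inequality on $[b,a]$ with conjugate exponents $p$ and $p/(p-1)$ (equivalently, Jensen for the convex map $x \mapsto x^{p/(p-1)}$ against the uniform measure $dt/(a-b)$), viewing the integrand as the product $t^{p-1}\cdot 1$. This produces
\[
\int_b^a t^{p-1}\,dt \;\leq\; \left(\int_b^a t^p\,dt\right)^{(p-1)/p}(a-b)^{1/p}.
\]

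Next I would invoke the Hermite--Hadamard upper bound for the convex function $t \mapsto t^p$ (which is convex exactly because $p \geq 1$), obtained by integrating the pointwise convexity estimate $f(sa + (1-s)b) \leq s f(a) + (1-s) f(b)$ over $s \in [0,1]$:
\[
\int_b^a t^p\,dt \;\leq\; \frac{(a-b)(a^p+b^p)}{2}.
\]
Combining these two estimates with the leading factor $p$ produces precisely $p\bigl(\tfrac{a^p+b^p}{2}\bigr)^{(p-1)/p}(a-b)$, which is the claimed inequality; the boundary case $p = 1$ reduces trivially to the identity $|a-b| = |a-b|$.

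I do not foresee a genuine obstacle here: both H\"older and Hermite--Hadamard are elementary consequences of convexity, and the argument degenerates cleanly at $p = 1$. The only delicate point is the exponent bookkeeping — the H\"older exponents must be chosen so that $t^{p-1}$ is raised precisely to $t^p$, because it is exactly for the function $t^p$ that the convex integral bound yields the sharp mean $\tfrac{1}{2}(a^p + b^p)$ on the right-hand side rather than a power mean of a less convenient order.
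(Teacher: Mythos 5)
Your argument is correct and complete: the chain
\begin{equation*}
a^p-b^p=p\int_b^a t^{p-1}\,dt\leq p\left(\int_b^a t^p\,dt\right)^{\frac{p-1}{p}}(a-b)^{\frac{1}{p}}\leq p\left(\frac{a^p+b^p}{2}\right)^{\frac{p-1}{p}}(a-b)
\end{equation*}
has the exponents matching exactly as you claim, and the degenerate case $p=1$ is handled separately. The paper itself omits the proof of this lemma, deferring to the cited references of Bauer--Keller--Wojciechowski and Keller--Mugnolo, and your fundamental-theorem-of-calculus plus H\"older plus convexity argument is precisely the standard one found there, so you have in effect supplied the missing proof rather than deviated from it.
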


The following lemma presents a Cheeger inequality for $p\geq 1$. For convenience in applying the inequality, we adopt notations that are commonly used in the variational methods on graphs to describe and prove it. For $\phi\in C_c(V)$, we define a $p$-energy $\mathcal{E}_p:C_c(V)\to[0,+\infty)$ by
\begin{equation*}\label{penergy}
	\mathcal{E}_p(\varphi):=\frac{1}{2}\sum_{x\in V}\sum_{xy\in E}w_{xy}\vert \varphi(y)-\varphi(x)\vert^p.
\end{equation*}
Additionally, we require the quantity
\begin{equation*}
	\lambda_p:=\inf_{0\neq\varphi\in C_c(V)}\frac{\mathcal{E}_p(\varphi)}{\underset{x\in V}\sum \deg_x |\varphi(x)|^p}.
\end{equation*}
We can then state the following Cheeger inequality:
\begin{lemma}\label{pcheeger}
	For $p\geq 1$, the following inequality holds:
	\begin{equation*}
		\frac{2^{p-1}}{p^p}\alpha^p\leq\lambda_p.
	\end{equation*}
\end{lemma}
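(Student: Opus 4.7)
The plan is to reduce the $p$-energy inequality to the case $p=1$ via the substitution $f=|\varphi|^p$, apply the Cheeger constant to the level sets of $f$ through the area/co-area formulas, and then undo the substitution using Lemma \ref{fp} together with H\"older's inequality.

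Fix an arbitrary $0\neq\varphi\in C_c(V)$ and set $f=|\varphi|^p$, which is nonnegative and finitely supported, so Lemmas \ref{area} and \ref{coarea} apply. Since $\Omega_t(f)=\{x: |\varphi(x)|^p>t\}$ is finite for each $t>0$, the definition of the Cheeger constant yields $|\partial\Omega_t(f)|\geq \alpha |\Omega_t(f)|$. Integrating this pointwise inequality in $t$ over $(0,+\infty)$ and combining Lemma \ref{area} on the right with Lemma \ref{coarea} on the left, I obtain
\begin{equation*}
\alpha\sum_{x\in V}\deg_x|\varphi(x)|^p \;\leq\; \frac{1}{2}\sum_{x\in V}\sum_{xy\in E}w_{xy}\bigl||\varphi(y)|^p-|\varphi(x)|^p\bigr|.
\end{equation*}

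Next I invoke Lemma \ref{fp} with $a=|\varphi(y)|$ and $b=|\varphi(x)|$, together with $||\varphi(y)|-|\varphi(x)||\leq|\varphi(y)-\varphi(x)|$, to estimate the right-hand side by
\begin{equation*}
\frac{p}{2}\sum_{x\in V}\sum_{xy\in E}w_{xy}\left(\frac{|\varphi(y)|^p+|\varphi(x)|^p}{2}\right)^{\!\frac{p-1}{p}}|\varphi(y)-\varphi(x)|.
\end{equation*}
Then I apply H\"older's inequality on the double sum $\sum_x\sum_{xy\in E}w_{xy}(\cdot)(\cdot)$ with conjugate exponents $p/(p-1)$ and $p$, which turns the bound into the product of $\bigl(\sum_x\sum_{xy\in E}w_{xy}\tfrac{|\varphi(y)|^p+|\varphi(x)|^p}{2}\bigr)^{(p-1)/p}$ and $(2\mathcal{E}_p(\varphi))^{1/p}$. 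Using the symmetry $w_{xy}=w_{yx}$ and the definition $\deg_x=\sum_{xy\in E}w_{xy}$, the first factor collapses to $\bigl(\sum_x\deg_x|\varphi(x)|^p\bigr)^{(p-1)/p}$.

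Putting these pieces together gives
\begin{equation*}
\alpha\sum_{x\in V}\deg_x|\varphi(x)|^p \;\leq\; \frac{p}{2}\cdot 2^{1/p}\Bigl(\sum_{x\in V}\deg_x|\varphi(x)|^p\Bigr)^{\!\frac{p-1}{p}}\mathcal{E}_p(\varphi)^{1/p},
\end{equation*}
from which dividing by the common factor and raising to the $p$-th power produces $\alpha^p\sum_x\deg_x|\varphi(x)|^p\leq \frac{p^p}{2^{p-1}}\mathcal{E}_p(\varphi)$. Taking the infimum over $0\neq\varphi\in C_c(V)$ yields the claim. I expect the main technical step to be the H\"older-plus-symmetry manipulation that cleanly recovers $\sum_x\deg_x|\varphi(x)|^p$; the area/co-area reduction is essentially automatic once $f=|\varphi|^p$ is chosen, and Lemma \ref{fp} is precisely tailored to bridge the $p=1$ Cheeger estimate and the $p$-energy.
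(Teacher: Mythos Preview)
Your proposal is correct and follows essentially the same route as the paper: the paper also sets $f=|\varphi|^p$, combines the area and co-area formulas with the definition of $\alpha$, invokes Lemma~\ref{fp}, splits $w_{xy}=w_{xy}^{(p-1)/p}w_{xy}^{1/p}$ to apply H\"older, and then uses the symmetry $w_{xy}=w_{yx}$ to collapse $\sum_x\sum_{xy}w_{xy}\tfrac{|\varphi(y)|^p+|\varphi(x)|^p}{2}$ into $\sum_x\deg_x|\varphi(x)|^p$. Your explicit mention of $\bigl||\varphi(y)|-|\varphi(x)|\bigr|\leq|\varphi(y)-\varphi(x)|$ is a small clarification the paper leaves implicit.
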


\begin{proof} For all $\varphi\in C_c(V)$, taking $f=\vert\varphi\vert^p$ in Lemmas \ref{area} and \ref{coarea} and using Lemma \ref{fp}, we obtain
	
	\begin{equation*}
		\begin{aligned}
			\alpha \sum_{x\in V}\deg_x |\varphi(x)|^p&=\alpha\int_{0}^{+\infty} \vert\Omega_t( |\varphi|^p)\vert dt\leq\int_{0}^{+\infty}\vert\partial\Omega_t(\vert \varphi\vert^p)\vert dt\\
			&=\frac{1}{2}\sum_{x\in V}\sum_{xy\in E}w_{xy}\vert\left(\vert\varphi(y)\vert^p-\vert\varphi(x)\vert^p\right)\vert\\
			&\leq\frac{p}{2}\sum_{x\in V}\sum_{xy\in E}w_{xy}\left(\frac{\vert\varphi(y)\vert^p+\vert\varphi(x)\vert^p}{2}\right)^\frac{p-1}{p}\vert\varphi(y)-\varphi(x)\vert\\
			&\leq\frac{p}{2}\sum_{x\in V}\sum_{xy\in E}\left(\frac{w_{xy}}{2}\left(\vert\varphi(y)\vert^p+\vert\varphi(x)\vert^p\right)\right)^\frac{p-1}{p}{w_{xy}}^{\frac{1}{p}}\vert\varphi(y)-\varphi(x)\vert\\
			&\leq\frac{p}{2}\left(\sum_{x\in V}\deg_x\vert\varphi(x)\vert^p\right)^\frac{p-1}{p}\left(\sum_{x\in V}\sum_{xy\in E}w_{xy}\vert\varphi(y)-\varphi(x)\vert^p\right)^{1/p}\\
			&=\frac{p}{2}\left(\sum_{x\in V}\deg_x\vert\varphi(x)\vert^p\right)^\frac{p-1}{p}\left(2\mathcal{E}_p(\varphi)\right)^{1/p},
		\end{aligned}
	\end{equation*}
	where the fourth inequality is due to H\"{o}lder's inequality. As a consequence, we can immediately obtain the desired result.
\end{proof}

To prove our theorems, we also need the following inequality:
\begin{lemma}\label{cp}
	Assume that $p\geq 1$ and $a_i\geq0$, $i=1,2,\ldots,n$. Then we have
	\begin{equation*}
		\sum_{i=1}^{n}a_i^p\leq\left(\sum_{i=1}^{n}a_i\right)^p.
	\end{equation*}
\end{lemma}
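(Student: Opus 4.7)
The plan is to reduce to the single elementary observation that $t^p \le t$ whenever $t \in [0,1]$ and $p \ge 1$, and then use homogeneity of the desired inequality (both sides scale like degree $p$ in the $a_i$).

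First I would dispose of the degenerate case $\sum_{i=1}^n a_i = 0$: then every $a_i = 0$, and both sides of the claimed inequality are $0$. So assume $S := \sum_{i=1}^n a_i > 0$. Next I would set $t_i := a_i / S$, so that $t_i \in [0,1]$ and $\sum_{i=1}^n t_i = 1$. Since $p \ge 1$, the function $t \mapsto t^p$ satisfies $t^p \le t$ on $[0,1]$ (this is immediate: $t^p = t \cdot t^{p-1} \le t \cdot 1$ because $0 \le t \le 1$ and $p-1 \ge 0$). Summing this pointwise inequality over $i$ yields
\begin{equation*}
\sum_{i=1}^n t_i^p \;\le\; \sum_{i=1}^n t_i \;=\; 1,
\end{equation*}
and multiplying through by $S^p$ gives exactly $\sum_{i=1}^n a_i^p \le (\sum_{i=1}^n a_i)^p$.

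An equivalent route, which I would mention as an alternative, is induction on $n$: the case $n=1$ is trivial, and the inductive step reduces to the two-term inequality $(a+b)^p \ge a^p + b^p$ for $a,b \ge 0$, which follows from the same monotonicity argument (e.g.\ by differentiating $f(s) = (a+s)^p - s^p$ in $s$ on $[0,b]$ and noting $f'(s) = p[(a+s)^{p-1} - s^{p-1}] \ge 0$ since $p \ge 1$). There is no substantive obstacle here; the only thing to be careful about is that the exponent inequality $t^p \le t$ really does require $t \in [0,1]$, which is why the normalization by $S$ is the key step.
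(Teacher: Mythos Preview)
Your proof is correct. Your primary argument --- normalize by $S=\sum a_i$ so that the $t_i:=a_i/S$ lie in $[0,1]$ and sum to $1$, then use $t^p\le t$ on $[0,1]$ --- is a genuinely different route from the paper's. The paper proceeds by induction on $n$: it first establishes the two-term inequality $a^p+b^p\le(a+b)^p$ by observing that $f(x)=(x+1)^p-x^p-1$ is increasing on $[0,\infty)$ with $f(0)=0$, and then passes from $n-1$ to $n$ in the obvious way. Your normalization trick dispatches all $n$ simultaneously and arguably isolates the heart of the matter more cleanly (the single scalar inequality $t^p\le t$), while the paper's induction is the more classical presentation and makes the reduction to two terms explicit. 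Your mentioned alternative is essentially the paper's approach, with a slightly different (but equally valid) verification of the two-term base case.
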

\begin{proof}
	We prove the lemma by induction. Consider the function $f(x):=(x+1)^p-x^p-1$, which is increasing for $x \in [0, +\infty)$. This implies that
	\begin{equation*}
		a^p+b^p\leq(a+b)^p.
	\end{equation*}
	The inequality obviously holds for $n=2$. Assume it holds for some $n > 2$, i.e., $\sum\limits_{i=1}^{n-1}a_i^p \leq \left(\sum\limits_{i=1}^{n-1}a_i\right)^p$. Then, for $n$ terms, we have
	\begin{equation*}
		a_1^p+a_2^p+\cdots+a_n^p\leq a_1^p+\left(a_2+\cdots+a_n\right)^p\leq\left(a_1+a_2+\cdots+a_n\right)^p,
	\end{equation*}
	which completes the proof.
\end{proof}

\indent\textbf{Proof of Theorem~{\upshape\ref{pgeq2}}}\quad
	Due to the fact that $|V|<+\infty$, we have $\varphi_0\equiv 1\in W^{1,p}(V)$. If $W^{1,p}(V)=W^{1,p}_0(V)$, then there exists a sequence $\{\varphi_k\}\subset C_c(V)$ such that $\varphi_k\to \varphi_0$ in $W^{1,p}(V)$ as $k\to+\infty$. Furthermore, it follows from Lemmas \ref{pcheeger} and \ref{cp} that for any $p\geq 2$, $k\geq1$ and $x_0\in V$, there holds
	\begin{equation*}
		\begin{aligned}
			\Vert\varphi_k-\varphi_0\Vert^p_{W^{1,p}(V)}&\geq\Vert\nabla(\varphi_k-\varphi_0)\Vert_p^p=\Vert\nabla\varphi_k\Vert_p^p\\
			&=\sum_{x\in V}\mu_x\left(\frac{1}{2\mu_x}\sum_{xy\in E}w_{xy}\vert\varphi_k(y)-\varphi_k(x)\vert^2\right)^{p/2}\\
			&=\left(\frac{1}{2}\right)^{p/2}\sum_{x\in V}\frac{1}{\mu_x^{\frac{p}{2}-1}}\left(\sum_{xy\in E}\vert\varphi_k(y)-\varphi_k(x)\vert^2\right)^{p/2}\\
			&\geq\left(\frac{1}{2}\right)^\frac{p-2}{2}\frac{1}{2}\sum_{x\in V}\sum_{xy\in E}\vert\varphi_k(y)-\varphi_k(x)\vert^p\\
			&=\left(\frac{1}{2}\right)^\frac{p-2}{2}\mathcal{E}_p(\varphi_k)\geq\left(\frac{1}{2}\right)^\frac{p-2}{2}\lambda_p\sum_{x\in V}\deg_x\vert\varphi_k(x)\vert^p\\
			&\geq\frac{2^\frac{p}{2}}{p^p}\alpha^p_p\sum_{x\in V}\deg_x\vert\varphi_k(x)\vert^p\\
			&\geq\frac{2^\frac{p}{2}}{p^p}\alpha^p_p \deg_{x_0}\vert\varphi_k(x_0)\vert^p.
		\end{aligned}
	\end{equation*}
	Since $\varphi_k$ converges to $\varphi_0$ in $W^{1,p}(V)$, it follows that $\varphi_k(x_0)\to \varphi_0(x_0)=1$ as $k\to+\infty$. Therefore, there exists $K_0>0$ such that for all $k>K_0$, we have $\varphi_k(x_0)>\frac{1}{\sqrt{2}}$. Then as $k\to+\infty$, we consider the following inequality:
	\begin{equation*}
		0\leftarrow\Vert\varphi_k-\varphi_0\Vert^p_{W^{1,p}(V)}\geq\frac{2^\frac{p}{2}}{p^p}\alpha^p_p\left(\frac{1}{\sqrt{2}}\right)^p\deg_{x_0}\geq\frac{\alpha^p_p}{p^p}\deg_{x_0}\geq \frac{\alpha^p_p}{p^p},
	\end{equation*}
	which leads to a contradiction. Thus, we can conclude that $W^{1,p}(V)\neq W_{0}^{1,p}(V)$.
$\hfill\square$

Next, we proceed with the proof of Theorem \ref{pg1}, continuing to employ the notations established in the preceding proof.

\indent\textbf{Proof of Theorem~{\upshape\ref{pg1}}}\quad
    For any $x_0\in V$, we have
	\begin{equation*}
		\begin{aligned}
			\Vert\varphi_k-\varphi_0\Vert^p_{\mathcal{W}^{1,p}(V)}&\geq\Vert\nabla_\mathcal{W}(\varphi_k-\varphi_0)\Vert_p^p
			=\Vert\nabla_\mathcal{W}\varphi_k\Vert_p^p\\
			&=\frac{1}{2}\sum_{x\in V}\sum_{xy\in E}w_{xy}\vert\varphi_k(y)-\varphi_k(x)\vert^p\\
			&=\mathcal{E}_p(\varphi_k)\geq\lambda_p\sum_{x\in V}\deg_x\vert\varphi_k(x)\vert^p\\
			&\geq\frac{2^{p-1}}{p^p}\alpha^p_p\sum_{x\in V}\deg_x\vert\varphi_k(x)\vert^p\\
			&\geq\frac{2^{p-1}}{p^p}\alpha^p_p\vert\varphi_k(x_0)\vert^p\deg_{x_0}.
		\end{aligned}
	\end{equation*}
	The subsequent portion of the proof follows the methodology applied in the proof of Theorem \ref{pgeq2}.
$\hfill\square$

At the end of this section, we provide an example of a connected and locally finite graph that satisfies the conditions of the theorems presented.

\begin{example}\label{example}
	As depicted in Figure \ref{graph}, let $G(V,E)$ be a tree-like connected and locally finite graph. We assign the weight $w_{xy}=1$ for each $xy\in E$, and define the measure of the vertex $x_k$ in the $k$-th layer as $\mu_{x_k}=3^{-k}$. It is evident that the graph satisfies $|V|=1<+\infty$ and $\alpha=1/3>0$.
\end{example}

\begin{figure}
	\centering
	\includegraphics[height=7cm,width=7cm]{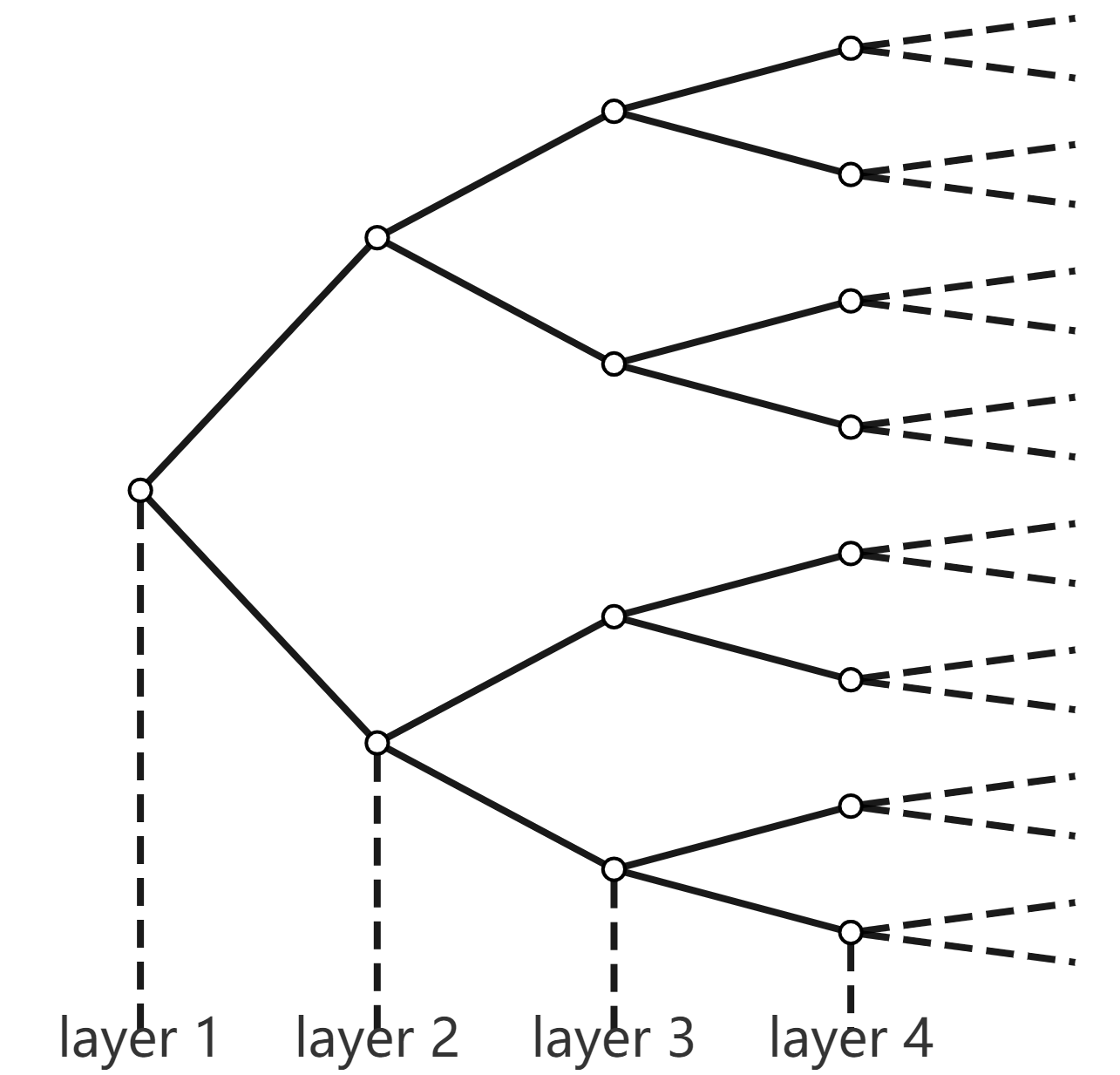}
	\caption{$G(V,E)$}
    \label{graph}
\end{figure}

\begin{remark}
	When discussing hyperbolic models with negative curvature on graphs \cite{ni2019community}, the term usually refers to the tree-like structure shown in Figure \ref{graph}. In contrast, when discussing smooth hyperbolic manifolds, the results of Aubin \cite{aubin1976espaces, hebey1996sobolev} indicate that in the complete hyperbolic space $\mathbb{H}^n$, we have $W^{1,p}(\mathbb{H}^n)=W_0^{1,p}(\mathbb{H}^n)$. Upon comparing the conclusions drawn from smooth manifolds and graphs, it is evident that a key condition for our theorems to be valid is that $|V|<+\infty$, which means that Example \ref{example} does not correspond to $\mathbb{H}^n$ whose volume is infinite.
\end{remark}

\section*{Acknowledgements}
This research is supported by the National Key R and D Program of China (2020YFA0713100), the National Natural Science Foundation of China (12271039 and 12101355) and the Open Project Program (K202303) of Key Laboratory of Mathematics and Complex Systems, Beijing Normal University. We are very grateful to Professor M. Keller for pointing out that we can use the Cheeger inequality to deal with this issue.




\end{document}